\numberwithin{equation}{section}
\newtheorem{theorem}{Theorem}[section]
\newtheorem{lemma}[theorem]{Lemma}
\newtheorem{proposition}[theorem]{Proposition}
\newtheorem{corollary}[theorem]{Corollary}
\newcommand{\R}{\mathbb{R}}
\newcommand{\N}{\mathbb{N}}
\renewcommand{\epsilon}{\varepsilon}
\newcommand{\eps}{\varepsilon}
\newcommand{\F}{\mathcal{F}}
\newcommand{\U}{\mathcal{U}}
\renewcommand{\leq}{\leqslant}
\renewcommand{\le}{\leqslant}
\renewcommand{\geq}{\geqslant}
\renewcommand{\ge}{\geqslant}
\newcommand{\dist}{\operatorname{dist}}
\title[Asymptotic development of nonlocal phase transition energies]
{A threshold for higher-order asymptotic development\\
of genuinely
nonlocal phase transition energies}
\author[S.~Dipierro]{\href{https://research-repository.uwa.edu.au/en/persons/serena-dipierro}{Serena Dipierro} }
\author[E.~Valdinoci]{\href{https://research-repository.uwa.edu.au/en/persons/enrico-valdinoci}{Enrico Valdinoci} }
\author[M. Vaughan]{\href{https://maryvaughan.github.io/}{Mary Vaughan}}
\address[S. Dipierro, E.~Valdinoci, M.~Vaughan]{The University of Western Australia, 
Department of Mathematics and Statistics,
35 Stirling HWY,
Crawley WA 6009, Australia}
\email{serena.dipierro@uwa.edu.au}
\email{enrico.valdinoci@uwa.edu.au}
\email{mary.vaughan@uwa.edu.au}
\keywords{Gamma-convergence, 
nonlocal phase transitions, 
fractional Allen--Cahn equation}
\subjclass[2010]{
%Primary:
82B26, % Phase transition (general)
35R11, %fractional partial differential equations
49J45 %Methods involving semicontinuity and convergence; relaxation
%Secondary:
}
\begin{document}
%%%%%%%%%%%%%%%%%%%%%%%%%%

\maketitle

\begin{abstract}
We study the higher-order asymptotic development of a nonlocal phase transition energy in bounded domains and with prescribed external boundary conditions.  
The energy under consideration has fractional order~$2s \in (0,1)$ and 
a first-order asymptotic development in the~$\Gamma$-sense as described by the fractional perimeter functional.
%and is known to converge in the~$\Gamma$-sense to the fractional perimeter functional. 

We prove that there is no meaningful second-order asymptotic expansion and, in fact, no asymptotic expansion of fractional order~$\mu > 2-2s$. 
In view of this range value for~$\mu$,
it would be interesting to develop a new asymptotic development for the~$\Gamma$-convergence of our energy functional which
takes into account fractional orders.

The results obtained here are also valid in every space dimension and with mild
assumptions on the exterior data.
\end{abstract}

\section{Introduction}
In this note, we study the higher-order asymptotic development of a long-range phase coexistence energy functional in the genuinely nonlocal regime under prescribed external data. The mathematical setting under consideration is that of an energy of the form
\begin{equation}\label{eq:Feps}
\mathcal{F}_\eps(u) := \eps \iint_{(\R^n \times \R^n) \setminus (\Omega^c \times \Omega^c)} \frac{|u(x) - u(y)|^2}{|x-y|^{n+2s}} \, dx\,dy
+\eps^{1-2s}\int_{\Omega} W(u(x)) \, dx,
\end{equation}
where~$\Omega \subset \R^n$
is a bounded {Lipschitz} domain, the notation~$\Omega^c$ stands, as usual, for~$\R^n\setminus\Omega$, $W$ is a double-well potential with minima at~$-1$ and~$1$, and~$\eps>0$ is a small parameter.

The fractional parameter range considered here is~$s \in \left(0,\frac12\right)$, which corresponds to significant far-away interactions which preserve the nonlocal character of the model at every scale: in particular, in this framework and, as shown in~\cite{SV-gamma}, the minimizers of~$\eps^{-1}\mathcal{F}_\eps$ %the energy functional 
approach, as~$\eps\searrow0$, a step function whose jump set is a minimizer for the fractional perimeter introduced in~\cite{CRS}.

We consider as ambient space~$X_g$ for our analysis that of measurable functions~$u:\R^n\to[-1,1]$ subject to the external datum prescription~$u=g$ in~$\R^n\setminus\Omega$ for a given continuous function~$g:\R^n \to [-1,1]$. 

It has been recently proved in~\cite[Theorem~1.1]{PAPERONE} that, as~$\eps\searrow0$, the minimal value~$m_\eps$ attained by the energy functional~\eqref{eq:Feps}
in~$X_g$ takes the form~$m_\eps=\eps m_1+o(\eps)$ with
\[
m_1 := \inf_{\substack{u~\text{s.t.} \\ u |_{\Omega} = \chi_E - \chi_{E^c}}} \left[ \displaystyle\iint_{\Omega\times\Omega}\frac{|u(x)-u(y)|^2}{|x-y|^{n+2s}}\,dx\,dy
+2\displaystyle\iint_{\Omega\times\Omega^c}\frac{|u(x)-g(y)|^2}{|x-y|^{n+2s}}\,dx\,dy\right].
\]
Here, the infimum is taken over measurable sets~$E \subset \R^n$ and, as customary,
$$ \chi_E(x):=\begin{dcases}1&{\mbox{ if }}x\in E,\\0&{\mbox{ otherwise.}}\end{dcases}$$

It is therefore desirable to understand the terms with higher orders in~$\eps$ which contribute to the minimal energy. This question can be addressed via the version of the~$\Gamma$-convergence theory (with respect to the metric in~$L^1(\Omega)$) put forth in~\cite{Baldo1, Baldo2}. In this framework, the asymptotic development of order~$k \in \N$ is written as
$$ \F_\eps =_{\Gamma} \F^{(0)} +\eps \F^{(1)} +\dots+\eps^k \F^{(k)}+o(\eps^{k})$$
and corresponds to the following conditions:
\begin{enumerate}
\item $\F^{(0)} =
\Gamma-\displaystyle\lim_{\eps\searrow0} \F_\eps$ in~${\mathcal{U}}_{-1}$;
\item for any~$j\in \{0,\dots, k-1\}$, we have that
$\F^{(j+1)} =\Gamma-\displaystyle\lim_{\eps\searrow0} \F_\eps^{(j+1)}$ 
in~${\mathcal{U}}_j$, where
\begin{eqnarray*}
&& {\mathcal{U}}_j := \{ u\in 
{\mathcal{U}}_{j-1} {\mbox{ s.t. }} \F^{(j)}(u)=m_j\}\\[.5em]
{\mbox{with }}&& m_j :=\inf_{{\mathcal{U}}_{j-1}}\F^{(j)}\\
{\mbox{and }}&& \F^{(j+1)}_\eps:=\frac{\F^{(j)}_\eps-m_j}{\eps}
.\end{eqnarray*}
\end{enumerate}
The above iteration starts with~${\mathcal{U}}_{-1}$ being the ambient functional space and~$
\F^{(0)}_\eps := \F_\eps$.

In this setting, one can show (see~\cite[pages~106 and 110]{Baldo1}) that 
\[
\{\text{limits of minimizers of}~\mathcal{F}_\eps\} \subset \U_k \subset \dots \subset \U_0 \subset \U_{-1}
\]
and  
\begin{equation*}
m_\eps = m_0 + \eps m_1 + \dots + \eps^k m_k+o(\eps^{k}).
\end{equation*}

The first-order $\Gamma$-convergence expansion of \eqref{eq:Feps} established in~\cite[Theorem~1.5]{PAPERONE} is described by
%The first nontrivial order of this~$\Gamma$-convergence expansion established in~\cite{PAPERONE} takes the form
\begin{equation*}
\F^{(0)}(u) = 0 \quad \hbox{for all}~u \in X_g\end{equation*}
and
\begin{equation*}
\F^{(1)}(u)
= \begin{cases}
\displaystyle\iint_{\Omega \times \Omega} \frac{|u(x) - u(y)|^2}{|x-y|^{n+2s}}\,dx\,dy +2\iint_{\Omega\times\Omega^c}
\frac{|u(x)-g(y)|^2}{|x-y|^{n+2s}}\,dx \,dy & 
\begin{matrix}
{\mbox{ if~$u \in X_g$ and}}\\
{\mbox{$u\big|_\Omega = \chi_E - \chi_{E^c}$}} \\ {\mbox{ for some $E \subset \R^n$,}}\end{matrix} \\
\\
+\infty & {\mbox{ otherwise.}}
\end{cases}
\end{equation*}

When~$n=1$, $\Omega = (-1,1)$, and~$g=\chi_{(0,+\infty)}-\chi_{(-\infty,0)}$, it was also shown in~\cite[Corollary~1.10]{PAPERONE} that
\begin{equation*}
\F^{(2)}(u)
= \begin{dcases}
-\infty & \hbox{if}~u~\hbox{is a minimizer of}~\F^{(1)} \quad \hbox{(i.e.~$u \in \mathcal{U}_1$)},\\
+\infty & \hbox{otherwise}.
\end{dcases}
\end{equation*}
We show here that the same expansion holds true in every dimension~$n\ge1$ and for external data~$g$ whose values are not necessarily confined in~$\{-1,1\}$. More precisely, we have the following:

\begin{theorem}\label{MSA:TH}
Suppose~$m_1 \not=0$ and let~$\bar{u} \in \mathcal{U}_1$ be such that~$\bar{u} \big|_{\Omega} = \chi_E - \chi_{E^c}$ for some measurable set~$E \subset \R^n$. Assume at least one of the following holds:
\begin{enumerate}[label={(I.\arabic*)}]
\item \label{item:interior}
there exist~$x_0 \in \partial E$ and~$r>0$ such that~$B_{r}(x_0) \Subset \Omega$ and~$\partial E \cap B_r(x_0)$ is a~$C^{1,\alpha}$ surface;
\item \label{item:boundary}
there exists a connected component~$\Omega'$ of~$\Omega$ such that neither~$g \equiv 1$ nor~$g \equiv -1$ on~$\partial \Omega'$. 
%there is a $r>0$, $b>0$, and $c \in (0,\frac12)$ such that for 
%each $\lambda \in \{+1,-1\}$ and $\delta \in (0,r)$, there is a $x_\lambda \in \Omega^c$ with $\operatorname{dist}(x_\lambda, \partial \Omega)< \delta/2$ and $|g- \lambda| \geq b$ in $B_{c \delta}(x_\lambda)$. 
\end{enumerate} 

Then, there exists a sequence~$v_\eps \in X_g$ such that~$v_\eps \to \bar{u}$ as~$\eps \searrow 0$ and, for any~$\mu>1-2s$,  
\[
\lim_{ \eps \searrow 0} \frac{\mathcal{F}_\eps^{(1)}(v_\eps) - m_1}{\eps^{\mu}} = - \infty.
\]
In particular,
\[
\lim_{ \eps \searrow 0} \mathcal{F}_\eps^{(2)}(v_\eps) = - \infty.
\]
\end{theorem}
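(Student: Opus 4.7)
Since~$\bar u|_\Omega \in \{-1,1\}$, we have~$W(\bar u) \equiv 0$ on~$\Omega$ and hence~$\mathcal{F}_\epsilon(\bar u) = \epsilon\, m_1$, so in particular~$\mathcal{F}_\epsilon^{(1)}(\bar u)= m_1$. Thus it suffices to produce~$v_\epsilon \in X_g$ with~$v_\epsilon \to \bar u$ in~$L^1$ and
\[
\mathcal{F}_\epsilon(v_\epsilon) - \epsilon\, m_1 \le -c\,\epsilon^{2-2s}
\]
for some~$c>0$ independent of~$\epsilon$: dividing by~$\epsilon^{\mu+1}$ then gives~$(\mathcal{F}_\epsilon^{(1)}(v_\epsilon) - m_1)/\epsilon^{\mu} \le -c\,\epsilon^{1-2s-\mu} \to -\infty$ as~$\epsilon \searrow 0$, for every fixed~$\mu > 1-2s$. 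The construction proceeds by inserting a ``transition layer'' of width~$\delta_\epsilon \sim \epsilon$ across a regular~$(n-1)$-piece of the discontinuity set of~$\bar u$: the point is that for~$s\in(0,\tfrac12)$, smoothing a step function on a thin layer strictly lowers the fractional Dirichlet energy, with the optimal layer scale~$\delta_\epsilon\sim\epsilon$.

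\noindent\textbf{Case~\ref{item:interior}.}
I would fix a smooth profile~$\phi:\R\to[-1,1]$ with~$\phi(t) = \pm 1$ for~$\pm t \ge 1$, and use the~$C^{1,\alpha}$ regularity of~$\partial E$ near~$x_0$ to select a smooth signed distance~$d$ to~$\partial E$ in~$B_r(x_0)$. The competitor is
\[
v_\epsilon(x) := \phi\bigl(d(x)/\delta_\epsilon\bigr) \text{ on } B_{r/2}(x_0), \qquad v_\epsilon(x) := \bar u(x) \text{ elsewhere,}
\]
joined by a tangential cut-off in a small annulus so that the exterior datum is preserved. Since~$\phi(d/\delta_\epsilon)\to\mathrm{sgn}(d)=\bar u$ pointwise off~$\partial E$, dominated convergence delivers~$v_\epsilon \to \bar u$ in~$L^1$. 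The energy difference~$\mathcal{F}_\epsilon(v_\epsilon) - \mathcal{F}_\epsilon(\bar u)$ splits into a potential cost of order~$C_W\,\epsilon^{1-2s}\,\delta_\epsilon\,\mathcal{H}^{n-1}(\partial E \cap B_{r/2}(x_0))$---obtained via the coarea formula and a rescaling~$d=\delta_\epsilon t$ in the normal variable---plus a fractional Dirichlet change of order~$-c_\phi\,\epsilon\,\delta_\epsilon^{1-2s}\,\mathcal{H}^{n-1}(\partial E \cap B_{r/2}(x_0))$, estimated by decomposing the bilinear form along slab--slab, slab--exterior and exterior--exterior pairs and rescaling analogously. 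Setting~$\delta_\epsilon=A\epsilon$ and optimizing in~$A$ to balance the two contributions yields~$\mathcal{F}_\epsilon(v_\epsilon)-\epsilon\,m_1\le -c\,\epsilon^{2-2s}$, provided the constant~$c_\phi>0$.

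\noindent\textbf{Case~\ref{item:boundary}.}
By continuity of~$g$ and the hypothesis, there exists~$\bar x \in \partial\Omega'$ with~$|g(\bar x)| < 1$. I would run a parallel construction inside~$\Omega'$ near~$\bar x$, introducing a transition layer of width~$\delta_\epsilon = A\epsilon$ along a piece of~$\partial\Omega'$ containing~$\bar x$, interpolating between~$\bar u|_{\Omega'}\in\{-1,1\}$ and a value closer to~$g(\bar x)$. The saving now originates from the cross term~$2\iint_{\Omega\times\Omega^c}|u(x)-g(y)|^2/|x-y|^{n+2s}\,dx\,dy$, whose first-order variation at~$\bar u$ near~$\bar x$ is strictly negative (moving~$u$ from~$\pm 1$ towards~$g(\bar x)$ reduces the mismatch with the exterior datum). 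The scaling analysis is identical to the interior case: the~$\epsilon^{1-2s}\delta_\epsilon$ potential cost is beaten by an~$\epsilon\,\delta_\epsilon^{1-2s}$ cross-term gain at~$\delta_\epsilon\sim\epsilon$, producing the same~$\epsilon^{2-2s}$ net saving.

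\noindent\textbf{Main obstacle.}
The hard step is to verify the strict positivity of~$c_\phi$---equivalently, to exhibit a smooth profile whose fractional Dirichlet form across a hyperplane is strictly smaller than that of~$\mathrm{sgn}$, after all three types of pairwise interactions (slab--slab, slab--exterior, exterior--exterior) are accounted for. A natural candidate is (an approximation of) the one-dimensional~$s$-harmonic extension of~$\mathrm{sgn}$ from~$|t|\ge 1$ to~$(-1,1)$: since~$\mathrm{sgn}$ is not~$s$-harmonic, strict convexity of the~$H^s$-form delivers the required strict decrease. Lifting this flat comparison to a~$C^{1,\alpha}$ hypersurface by a local straightening, and controlling the tangential cut-off error, should then be routine perturbation arguments thanks to the regularity hypothesis and standard estimates for fractional seminorms under smooth diffeomorphisms.
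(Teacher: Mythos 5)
Your proposal reaches the same $\eps^{2-2s}$ rate for $\mathcal{F}_\eps(v_\eps)-\eps m_1$, and hence the conclusion, but by a genuinely different construction from the paper's. Where you insert a single continuous transition layer $\phi(d(x)/\delta_\eps)$ of width $\delta_\eps\sim\eps$ across a regular piece of the interface (a classical Modica--Mortola recovery sequence), the paper never smooths the jump at all: it leaves $\bar u$ as a sharp step, selects $N_\delta\sim\delta^{1-n}$ disjoint balls $B_\delta(x_i)$ centered on the interface via an elementary covering lemma, and inside each ball simply \emph{lifts} $\bar u$ from $-1$ to the constant $-1+\theta$ on a smaller ball $B_{c\delta}(p_i)$ on the ``$-1$ side.'' Each such lift costs potential energy $\sim\delta^n$ but saves kinetic energy $\sim\delta^{n-2s}$ (Proposition~3.1), because the lifted ball interacts with the nearby ``$+1$ side'' ball $B_{c\delta}(q_i)$ supplied by the clean ball condition of~\cite{DVV}. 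Summing over the $N_\delta$ balls and optimizing $\delta\sim\eps$ reproduces your $\eps^{2-2s}$ saving. The two routes trade different ingredients: the paper's key estimate requires nothing beyond a two-ball comparison and the clean ball condition, avoiding fractional PDE theory entirely; yours requires the strict instability of $\operatorname{sgn}$ for the one-dimensional $H^s$-form (i.e.\ that $\operatorname{sgn}$ is not $s$-harmonic, which is true and does give $c_\phi>0$ via the $s$-harmonic replacement, but needs a careful argument about the relative energy since both $\phi$ and $\operatorname{sgn}$ have divergent $H^s(\R)$ seminorms). Your hardest technical point is not the existence of a profile with $c_\phi>0$---that is fine---but the straightening and tangential cut-off bookkeeping, especially in Case~\ref{item:boundary} where $\partial\Omega'$ is only Lipschitz so the kernel distortion is $O(1)$ rather than $o(1)$; there you must also exploit that by choosing the transition amplitude $\theta_0$ small the $\Omega'\times\Omega'$ kinetic cost is $O(\theta_0^2)$ against an $O(\theta_0)$ cross-term gain. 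The paper's ball-lifting argument sidesteps all of this because Proposition~3.1 is kernel-distortion-free. Both approaches are valid; the paper's is more elementary and self-contained, while yours would generalize more transparently to variants where a genuine optimal profile matters.
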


It follows that there is no nontrivial second-order asymptotic development of~$\mathcal{F}_\eps$, and in fact, any meaningful higher-order asymptotic development must be of \emph{fractional} order.

\begin{corollary}\label{cor:F1}
In the setting of Theorem~\ref{MSA:TH}, we have that
\[
\mathcal{F}^{(2)}(u)
 = \begin{cases}
-\infty & \hbox{if $u$ is a minimizer of $\mathcal{F}^{(1)}$ \quad \hbox{(i.e.~$u \in \mathcal{U}_1$)}},\\
+ \infty & \hbox{otherwise}.
\end{cases}
\]
\end{corollary}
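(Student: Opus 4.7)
The plan is to split into the two cases $u \notin \mathcal{U}_1$ and $u \in \mathcal{U}_1$, handling each via a short direct argument; the heavy lifting has already been done in Theorem~\ref{MSA:TH}.

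For~$u \notin \mathcal{U}_1$, I would exploit the~$\Gamma$-liminf inequality for~$\mathcal{F}^{(1)}$ established in~\cite{PAPERONE}: for any sequence~$v_\eps \to u$ in~$L^1(\Omega)$,
\[
\liminf_{\eps \searrow 0} \mathcal{F}^{(1)}_\eps(v_\eps) \geq \mathcal{F}^{(1)}(u) > m_1.
\]
Setting~$\delta := \mathcal{F}^{(1)}(u) - m_1 > 0$, for all sufficiently small~$\eps$ one then obtains
\[
\mathcal{F}_\eps^{(2)}(v_\eps) = \frac{\mathcal{F}_\eps^{(1)}(v_\eps) - m_1}{\eps} \geq \frac{\delta}{2\eps} \;\longrightarrow\; +\infty.
\]
As this bound is independent of the chosen sequence~$v_\eps \to u$, the~$\Gamma$-liminf of~$\mathcal{F}^{(2)}_\eps$ at~$u$ equals~$+\infty$, and any constant recovery sequence trivially realises the limsup bound, so~$\mathcal{F}^{(2)}(u) = +\infty$.

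For~$u \in \mathcal{U}_1$, I would apply Theorem~\ref{MSA:TH} directly with~$\bar u := u$. Membership in~$\mathcal{U}_1$ forces~$\mathcal{F}^{(1)}(u) < +\infty$, so automatically~$u\big|_\Omega = \chi_E - \chi_{E^c}$ for some measurable~$E \subset \R^n$, while~\ref{item:interior} or~\ref{item:boundary} is in force by the standing hypothesis. Picking~$\mu = 1$ (admissible since~$s \in (0,\tfrac12)$ gives~$1 > 1-2s$), the theorem supplies a sequence~$v_\eps \in X_g$ with~$v_\eps \to u$ and
\[
\lim_{\eps \searrow 0} \mathcal{F}^{(2)}_\eps(v_\eps) = -\infty.
\]
This sequence witnesses that the~$\Gamma$-limsup of~$\mathcal{F}^{(2)}_\eps$ at~$u$ is~$-\infty$; the~$\Gamma$-liminf is trivially bounded above by it, so~$\mathcal{F}^{(2)}(u) = -\infty$.

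I do not anticipate any substantive obstacle, as the entire construction of the competitor sequence is supplied by Theorem~\ref{MSA:TH}. The one point requiring care in the write-up is the distinction between hypotheses~\ref{item:interior} and~\ref{item:boundary}: condition~\ref{item:boundary} depends only on~$g$ and~$\Omega$ and therefore transfers uniformly to every~$u \in \mathcal{U}_1$, whereas~\ref{item:interior} is a condition on the interior boundary of the set~$E$ associated to the particular~$u$ in question and must be assumed on a case-by-case basis. Beyond this bookkeeping, the corollary is a clean reformulation of Theorem~\ref{MSA:TH} via the Baldo--Modica iterative definition of~$\mathcal{F}^{(2)}$ recalled in the introduction.
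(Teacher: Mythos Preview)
Your proposal is correct and matches the paper's (implicit) approach: the paper does not give a separate proof of the corollary, treating it as an immediate consequence of Theorem~\ref{MSA:TH} for the $-\infty$ case and of the $\Gamma$-liminf inequality for~$\mathcal{F}^{(1)}$ (invoked as~\cite[Lemma~4.2]{PAPERONE} in the parallel argument of Section~\ref{appendix:2}) for the $+\infty$ case. Your caveat about hypothesis~\ref{item:interior} being $u$-dependent versus~\ref{item:boundary} being uniform is exactly the right bookkeeping observation, and is consistent with the paper's own remark following the corollary.
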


We remark that condition~\ref{item:interior} corresponds to the minimizer~$\bar{u}$ having a jump discontinuity in the interior and condition~\ref{item:boundary} corresponds to~$\bar{u}$ having a jump discontinuity at the boundary. 
Indeed, suppose~$\bar u \in \mathcal{U}_1$ is such that~$\bar{u}\big|_{\Omega} = \chi_E - \chi_{E^c}$ for some~$E \subset \R^n$. 
We will prove in Proposition~\ref{ALM} that~$E$ is an almost minimizer to the fractional perimeter functional, and hence~\ref{item:interior} holds for regular points~$x_0 \in \partial E \cap \Omega$, see~\cite{DVV}. 
On the other hand, if~\ref{item:interior} does not hold, then either~$\bar{u} \equiv 1$ or~$\bar{u} \equiv -1$ and jumps discontinuities at the boundary are guaranteed by~\ref{item:boundary}. 

The sequence in Theorem~\ref{MSA:TH} is constructed by lifting/lowering the minimizer~$\bar{u}$ on one side of a jump discontinuity  in small balls whose radii are infinitesimal as~$\eps \searrow 0$ (see Proposition~\ref{prop:anydim65}). We then iterate this process by applying a simple covering lemma to construct a sequence~$v_\eps$ whose energy is bounded above by~$-\kappa \eps^{1-2s}$ for some constant~$\kappa>0$.  

Note that, when~$m_1=0$, we obtain the full asymptotic development,
according to the following statement:

\begin{theorem}\label{lem:trivial}
If~$m_1=0$, then, for all~$k \geq 2$, 
\[
\mathcal{F}^{(k)}(u) = \begin{cases}
0 & \hbox{if $u \in \U_1$},\\
+ \infty & \hbox{otherwise},
\end{cases}
\]
and, consequently, $m_k = 0$ and~$\U_k = \U_1$. 
\end{theorem}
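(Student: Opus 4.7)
The plan is to exploit the rigidity forced by the hypothesis~$m_1=0$ to reduce the problem to a single constant function on which~$\mathcal{F}_\eps$ vanishes identically. Any~$u \in \mathcal{U}_1$ satisfies~$u\big|_\Omega = \chi_E - \chi_{E^c}$ for some~$E \subset \R^n$ and~$\mathcal{F}^{(1)}(u)=0$; since both double integrals defining~$\mathcal{F}^{(1)}$ are nonnegative, each must vanish. Vanishing of the~$\Omega \times \Omega$ integral forces~$u$ to be constant on~$\Omega$, necessarily equal to some~$c \in \{-1,1\}$. Vanishing of the~$\Omega \times \Omega^c$ integral then forces~$g(y)=c$ for a.e.~$y \in \Omega^c$ and, by continuity of~$g$, on all of~$\Omega^c$. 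Hence~$\mathcal{U}_1 = \{\bar u\}$ with~$\bar u \equiv c$ on~$\R^n$. The crucial consequence is that~$\mathcal{F}_\eps(\bar u) \equiv 0$ for every~$\eps>0$, because the nonlocal interaction term vanishes on constants and~$W(c)=W(\pm 1)=0$.

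Next, I would proceed by induction on~$k \geq 2$. Since~$m_0=0$ (as~$\mathcal{F}^{(0)}\equiv 0$ on~$X_g$) and~$m_1=0$, the definitions unwind to~$\mathcal{F}^{(2)}_\eps = \mathcal{F}_\eps/\eps^2$. For~$u = \bar u$, the constant sequence~$v_\eps \equiv \bar u$ gives~$\mathcal{F}^{(2)}_\eps(v_\eps)=0$, so the~$\Gamma$-limsup at~$\bar u$ is at most~$0$; the matching lower bound is immediate from~$\mathcal{F}_\eps \geq 0$. For~$u \notin \mathcal{U}_1$, the first-order~$\Gamma$-convergence result recalled from~\cite{PAPERONE} gives~$\mathcal{F}^{(1)}(u)>0$ (including the value~$+\infty$), so any sequence~$v_\eps \to u$ satisfies~$\liminf_{\eps\searrow 0}\mathcal{F}_\eps(v_\eps)/\eps \geq \mathcal{F}^{(1)}(u)>0$, and dividing by another~$\eps$ forces~$\mathcal{F}^{(2)}(u)=+\infty$. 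Thus~$m_2=0$ and~$\mathcal{U}_2=\mathcal{U}_1$.

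The inductive step is a verbatim repetition: assuming~$m_j=0$ and~$\mathcal{U}_j=\mathcal{U}_1$ for all~$j \leq k-1$, the chain of definitions collapses to~$\mathcal{F}^{(k)}_\eps = \mathcal{F}_\eps/\eps^k$, which vanishes identically at~$\bar u$ via the constant recovery sequence and diverges outside~$\mathcal{U}_1$ by the same~$\Gamma$-liminf argument applied at level~$k-1$. This yields~$\mathcal{F}^{(k)}=0$ on~$\mathcal{U}_1$ and~$+\infty$ elsewhere, with~$m_k=0$ and~$\mathcal{U}_k=\mathcal{U}_1$, completing the induction. I do not anticipate any serious obstacle: the entire content of the theorem is the observation that~$m_1=0$ is only possible when~$g$ is identically a well of~$W$, in which case~$\bar u$ is an exact zero of~$\mathcal{F}_\eps$ at every scale, trivializing all higher-order expansions.
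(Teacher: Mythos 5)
Your proof is correct and follows essentially the same inductive scheme as the paper's: observe that $\mathcal{F}_\eps^{(k)} = \eps^{-k}\mathcal{F}_\eps$ once $m_1=0$, use $\bar u$ itself as the recovery sequence on $\mathcal{U}_1$, and obtain the $\Gamma$-liminf for $u \notin \mathcal{U}_1$ by rescaling the first-order liminf inequality (the paper invokes~\cite[Lemma~4.2]{PAPERONE} for this last step). The rigidity observation you add at the outset---that $m_1=0$ forces $\mathcal{U}_1$ to collapse to a single constant $\pm 1$ with $g$ equal to that constant on $\Omega^c$---is a nice extra, but it is not needed: the key identity $\mathcal{F}_\eps(\bar u)=0$ already follows from $\bar u(Q_\Omega)=m_1=0$ together with $W(\bar u)\equiv 0$ a.e.\ on $\Omega$, both of which hold for any $\bar u\in\mathcal{U}_1$.
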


Following~\cite{Baldo1, Baldo2}, there have been several works devoted to higher-order asymptotic development in the local setting, see for instance~\cite{Braides} for additional details. 
We reference the reader to~\cite{MR759767, MR2971613, MR3385247} for a higher-order~$\Gamma$-convergence expansion of a classical, one-dimensional, Modica-Mortola energy functional and~\cite{DalMaso, Leoni} for the higher-dimensional case. 
Nevertheless, our results are unique to the nonlocal framework and do not follow from the classical theory. 

\subsection{Notation}

For convenience, we write the domain of integration in~\eqref{eq:Feps} as
\[
Q_\Omega := (\R^n \times \R^n) \setminus (\Omega^c \times \Omega^c)= (\Omega \times \Omega) \cup (\Omega^c \times \Omega) \cup (\Omega \times \Omega^c),
\]
and denote the kinetic energy of a measurable function~$u:\R^n \to \R$ by
\[
u(Q_\Omega) := \iint_{Q_\Omega} \frac{|u(x) - u(y)|^2}{|x-y|^{n+2s}} \, dx \, dy. 
\]

\subsection{Paper organization}

The rest of the paper is organized as follows. 
First, in Section~\ref{sec:almost}, we prove that minimizers of~$\mathcal{F}^{(1)}$ correspond to almost minimizers of the fractional perimeter functional. 
Then we quantify the drop in kinetic energy allowed by lifting/lowering the minimizer on one side of the jump discontinuity in Section~\ref{sec:energy}. 
An elementary covering lemma is given in Section~\ref{sec:covering}. 
The proof of Theorem~\ref{MSA:TH} is contained in Section~\ref{sec:main}
and the proof of Theorem~\ref{lem:trivial} in Section~\ref{appendix:2}.

For the sake of completeness, Appendix~\ref{appe:n1} contains a more precise quantification in the drop in kinetic energy in dimension~$1$. 

\section{Almost minimizers}\label{sec:almost}

In this section, given a measurable set~$E \subseteq \R^n$,  we consider the functional 
\begin{equation}\label{yrueifhndjgfdh54yu3}\begin{split}
{\mathcal{H}}(E,\Omega)&:=
\iint_{\Omega\times\Omega}\frac{|(\chi_{E}(x)- \chi_{E^c}(x))-(\chi_{E}(y)- \chi_{E^c}(y))|^2}{|x-y|^{n+2s}}\,dx\,dy\\
&\qquad+2\iint_{\Omega\times(\R^n\setminus\Omega)}\frac{|(\chi_{E}(x)- \chi_{E^c}(x))-g(y)|^2}{|x-y|^{n+2s}}\,dx\,dy.
\end{split}\end{equation}
Notice that if~$u \in X_g$ satisfies~$u|_{\Omega} = \chi_E - \chi_{E^c}$, then~$u(Q_\Omega) = \mathcal{H}(E,\Omega)$.

Recall from~\cite{CRS} that the fractional perimeter of a set~$E$ in~$\Omega$ is given by
\[
\operatorname{Per}_s(E,\Omega) 
=\int_{E\cap\Omega}\int_{E^c}\frac{dx\,dy}{|x-y|^{n+2s}}+
\int_{E \cap \Omega^c}\int_{E^c \cap \Omega}\frac{dx\,dy}{|x-y|^{n+2s}}. 
%= \frac12 \iint_{(\R^n \times \R^n) \setminus (\Omega^c \times \Omega^c)} \frac{|\chi_E(x)- \chi_E(y)|^2}{|x-y|^{n+2s}} \, dy \, dx.
\]

Our goal is to show that minimizers of the functional~$\mathcal{H}(\cdot, \Omega)$
are almost minimizers of the fractional perimeter in a smaller domain. 

\begin{proposition}\label{ALM} Let~$\Omega'\Subset\Omega$ and suppose that~$E$ is a minimizer for~${\mathcal{H}}(\cdot,\Omega)$.

Then, there exists~$\Lambda >0$, depending only on~$n$, $s$, $\Omega$, and~$\Omega'$, 
such that, for all~$F\subseteq\R^n$
such that~$F\setminus\Omega'=E\setminus\Omega'$,
$$\operatorname{Per}_s(E,\Omega')\le\operatorname{Per}_s(F,\Omega')+\Lambda|E\Delta F|.$$
Moreover, $\Lambda = C[\dist(\partial \Omega', \partial \Omega)]^{-2s}$,
for some~$C>0$ depending only on~$n$ and~$s$.
\end{proposition}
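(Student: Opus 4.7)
My plan is to exploit the minimality of $E$ by writing both $\mathcal{H}(E,\Omega) - \mathcal{H}(F,\Omega)$ and $\per_s(E,\Omega') - \per_s(F,\Omega')$ as sums of set-interaction integrals and matching their ``interior'' parts. Setting $L(A,B) := \iint_{A \times B} |x-y|^{-n-2s}\, dx\, dy$ and using that $u_E := \chi_E - \chi_{E^c} \in \{-1,1\}$ on $\Omega$ (so that $|u_E(x)-u_E(y)|^2 = 2 - 2u_E(x)u_E(y)$ and $|u_E(x) - g(y)|^2 = 1 + g(y)^2 - 2u_E(x)g(y)$), I would expand
\[
\mathcal{H}(E,\Omega) = 8\, L(E \cap \Omega, E^c \cap \Omega) - 4 \int_\Omega u_E(x)\, K_g(x)\, dx + R_g,
\]
where $K_g(x) := \int_{\Omega^c} g(y)\, |x-y|^{-n-2s}\, dy$ and $R_g$ is independent of $E$. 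Subtracting the analogue for $F$ gives
\[
\mathcal{H}(E,\Omega) - \mathcal{H}(F,\Omega) = 8\bigl[L(E\cap\Omega, E^c\cap\Omega) - L(F\cap\Omega, F^c\cap\Omega)\bigr] - 4\int_\Omega(u_E - u_F)\, K_g\, dx.
\]

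Next, I would split $\Omega = \Omega' \cup (\Omega \setminus \Omega')$ in the set-interaction difference. Since $E \setminus \Omega' = F \setminus \Omega'$, the block $L(E \cap (\Omega\setminus\Omega'), E^c \cap (\Omega\setminus\Omega'))$ is identical for $E$ and $F$ and cancels, leaving a triple $A + B_1 + B_2$ consisting of the differences of $L(E\cap\Omega', E^c\cap\Omega')$, $L(E\cap\Omega', E^c\cap(\Omega\setminus\Omega'))$, and $L(E\cap(\Omega\setminus\Omega'), E^c\cap\Omega')$ against their $F$-counterparts. The parallel decomposition of $\per_s(E,\Omega')$ using $(\Omega')^c = (\Omega\setminus\Omega') \cup \Omega^c$ produces
\[
\per_s(E,\Omega') - \per_s(F,\Omega') = A + B_1 + B_2 + C_1 + C_2,
\]
where $C_1, C_2$ are the two remaining differences whose outer integration runs over $\Omega^c$. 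Eliminating $A + B_1 + B_2$ between the two identities and invoking the minimality $\mathcal{H}(E,\Omega) \leq \mathcal{H}(F,\Omega)$ yields
\[
\per_s(E,\Omega') - \per_s(F,\Omega') \leq \tfrac{1}{2} \int_\Omega (u_E - u_F)\, K_g\, dx + C_1 + C_2.
\]

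To close, I would apply the elementary far-field estimate: for $x \in \Omega'$ and $y \in \Omega^c$ we have $|x-y| \geq d := \dist(\partial\Omega', \partial\Omega)$, whence $\int_{\Omega^c} |x-y|^{-n-2s}\, dy \leq C(n)\, d^{-2s}$. Since $u_E - u_F$ is supported in $E \Delta F \subset \Omega'$ with modulus at most $2$, and since $K_g$, $C_1$, $C_2$ all involve only kernel integrations over $\Omega^c$ against $\chi_E - \chi_F$, each of the three residual terms is dominated by $C\, d^{-2s}\, |E\Delta F|$, yielding the claim with $\Lambda = C\, d^{-2s}$. I expect the only delicate point to be the algebraic bookkeeping that matches the interface blocks $A + B_1 + B_2$ exactly between the two expansions: the would-be kernel-singular interactions with $y \in \Omega\setminus\Omega'$ (which could otherwise blow up as $x \to \partial\Omega'$) disappear precisely because of this cancellation, leaving only integrations against $\Omega^c$ where the uniform $d^{-2s}$ bound is available.
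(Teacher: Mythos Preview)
Your approach is correct and essentially the same as the paper's: both decompose $\per_s(E,\Omega') - \per_s(F,\Omega')$ and $\mathcal{H}(E,\Omega) - \mathcal{H}(F,\Omega)$ into set-interaction blocks, match the interior pieces exactly, invoke minimality to discard the $\mathcal{H}$-difference, and bound the residual $\Omega'\times\Omega^c$ terms via the far-field estimate $|x-y|\ge d$. The only cosmetic difference is that the paper keeps the integrands as squared differences throughout rather than passing through your kernel $K_g$ and remainder $R_g$; your expansion is legitimate here because $s<\tfrac12$ and $\Omega$ Lipschitz make $R_g$ and $\int_\Omega u_E K_g$ separately finite, but you should note this since the same splitting would be formal for $s\ge\tfrac12$.
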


\begin{proof} If~$F$ is as above, we see that 
\begin{eqnarray*}
&&\operatorname{Per}_s(F,\Omega')\\&&\quad=\int_{F\cap\Omega'}\int_{F^c\cap\Omega'}\frac{dx\,dy}{|x-y|^{n+2s}}+
\int_{F\cap\Omega'}\int_{F^c\setminus\Omega'}\frac{dx\,dy}{|x-y|^{n+2s}}+
\int_{F\setminus\Omega'}\int_{F^c\cap\Omega'}\frac{dx\,dy}{|x-y|^{n+2s}}\\
&&\quad=\frac12\iint_{\Omega'\times\Omega'}\frac{|\chi_F(x)-\chi_F(y)|^2}{|x-y|^{n+2s}}\,dx\,dy+
\iint_{\Omega'\times(\R^n\setminus\Omega')}\frac{|\chi_F(x)-\chi_F(y)|^2}{|x-y|^{n+2s}}\,dx\,dy\\
&&\quad=\frac12\iint_{\Omega\times\Omega}\frac{|\chi_F(x)-\chi_F(y)|^2}{|x-y|^{n+2s}}\,dx\,dy
-\iint_{\Omega'\times(\Omega\setminus\Omega')}\frac{|\chi_F(x)-\chi_F(y)|^2}{|x-y|^{n+2s}}\,dx\,dy\\&&\qquad\quad
-\frac12\iint_{(\Omega\setminus\Omega')\times(\Omega\setminus\Omega')}\frac{|\chi_F(x)-\chi_F(y)|^2}{|x-y|^{n+2s}}\,dx\,dy\\&&\qquad\quad
+
\iint_{\Omega'\times(\R^n\setminus\Omega')}\frac{|\chi_F(x)-\chi_F(y)|^2}{|x-y|^{n+2s}}\,dx\,dy\\
&&\quad=\frac12\iint_{\Omega\times\Omega}\frac{|\chi_F(x)-\chi_F(y)|^2}{|x-y|^{n+2s}}\,dx\,dy
-\frac12\iint_{(\Omega\setminus\Omega')\times(\Omega\setminus\Omega')}\frac{|\chi_F(x)-\chi_F(y)|^2}{|x-y|^{n+2s}}\,dx\,dy\\&&\qquad\quad
+
\iint_{\Omega'\times(\R^n\setminus\Omega)}\frac{{|\chi_F(x) - \chi_{F}(y)|^2}}{|x-y|^{n+2s}}\,dx\,dy.
\end{eqnarray*}
Notice that
\[
|(\chi_E(x)- \chi_{E^c}(x)) - (\chi_E(y)- \chi_{E^c}(y))|
	= 2 |\chi_E(x) - \chi_E(y)|.
\]
Therefore,
\begin{eqnarray*}
&&\operatorname{Per}_s(E,\Omega')-\operatorname{Per}_s(F,\Omega')
\\
&&\quad=\frac12\iint_{\Omega\times\Omega}\frac{|\chi_E(x)-\chi_E(y)|^2}{|x-y|^{n+2s}}\,dx\,dy
-\frac12\iint_{\Omega\times\Omega}\frac{|\chi_F(x)-\chi_F(y)|^2}{|x-y|^{n+2s}}\,dx\,dy\\&&\qquad\quad
+
\iint_{\Omega'\times(\R^n\setminus\Omega)}\frac{|\chi_E(x) -\chi_E(y)|^2-|\chi_F(x) -\chi_F(y)|^2}{|x-y|^{n+2s}}\,dx\,dy 
\\
&&\quad=
\frac18\iint_{\Omega\times\Omega}\frac{|(\chi_E(x)- \chi_{E^c}(x)) - (\chi_E(y)- \chi_{E^c}(y))|^2}{|x-y|^{n+2s}}\,dx\,dy
\\&&\qquad\quad-\frac18\iint_{\Omega\times\Omega}\frac{|(\chi_F(x)- \chi_{F^c}(x)) - (\chi_F(y)- \chi_{F^c}(y))|^2}{|x-y|^{n+2s}}\,dx\,dy\\&&\qquad\quad
+
\iint_{\Omega'\times(\R^n\setminus\Omega)}\frac{|\chi_E(x) -\chi_E(y)|^2-|\chi_F(x) -\chi_F(y)|^2}{|x-y|^{n+2s}}\,dx\,dy 
\\
&&\quad=\frac18\big({\mathcal{H}}(E,\Omega)-{\mathcal{H}}(F,\Omega)\big)\\&&\qquad\quad
-\frac14\iint_{\Omega\times(\R^n\setminus\Omega)}\frac{|\chi_E(x) - \chi_{E^c}(x)-g(y)|^2}{|x-y|^{n+2s}}\,dx\,dy\\&&\qquad \quad
+
\frac14\iint_{\Omega\times(\R^n\setminus\Omega)}\frac{|\chi_F(x) - \chi_{F^c}(x)-g(y)|^2}{|x-y|^{n+2s}}\,dx\,dy\\&&\qquad \quad
+
\iint_{\Omega'\times(\R^n\setminus\Omega)}\frac{|\chi_E(x) -\chi_E(y)|^2-|\chi_F(x) -\chi_F(y)|^2}{|x-y|^{n+2s}}\,dx\,dy.
\end{eqnarray*}
Thus, the minimality of~$E$ for the functional~${\mathcal{H}}(\cdot,\Omega)$
and the fact that~$F\setminus\Omega'=E\setminus\Omega'$ give that
\begin{eqnarray*}
&&\operatorname{Per}_s(E,\Omega')-\operatorname{Per}_s(F,\Omega')
\\&&\quad\le -
\frac14\iint_{\Omega' \times (\R^n \setminus \Omega)}\frac{
|\chi_E(x) - \chi_{E^c}(x)-g(y)|^2-|\chi_F(x) - \chi_{F^c}(x)-g(y)|^2}{|x-y|^{n+2s}}\,dx\,dy\\
&&\qquad\quad
+
\iint_{\Omega'\times(\R^n\setminus\Omega)}\frac{|\chi_E(x) -\chi_E(y)|^2-|\chi_F(x) -\chi_E(y)|^2}{|x-y|^{n+2s}}\,dx\,dy .
\end{eqnarray*}

Notice also that
\[
|(\chi_E(x)- \chi_{E^c}(x)) - (\chi_F(x)- \chi_{F^c}(x))|
 	= 2|\chi_E(x) - \chi_F(x)|.
\]
Hence, for~$x \in \Omega'$ and~$y \in \R^n \setminus \Omega$, 
\begin{align*}
&\left|
|\chi_E(x) - \chi_{E^c}(x)-g(y)|^2-|\chi_F(x) - \chi_{F^c}(x)-g(y)|^2
\right|\\
&\quad =\big| (\chi_E(x) - \chi_{E^c}(x)) -  (\chi_F(x) - \chi_{F^c}(x))\big|\;
	\big| (\chi_E(x) - \chi_{E^c}(x)) +  (\chi_F(x) - \chi_{F^c}(x)) - 2g(y)\big|\\
&\quad\leq 8| \chi_E(x) -\chi_F(x)|
\end{align*}
and
\begin{align*}
\big||\chi_E(x) -\chi_E(y)|^2-|\chi_F(x) -\chi_E(y)|^2\big|
	&= |\chi_E(x) - \chi_F(x)|\;|\chi_E(x) + \chi_F(x)-2\chi_E(y)|\\
	&\leq 4|\chi_E(x) - \chi_F(x)|.
\end{align*}
Using this, we estimate
\begin{eqnarray*}
&&\operatorname{Per}_s(E,\Omega')-\operatorname{Per}_s(F,\Omega')
\\&&\leq 
\frac14\iint_{\Omega' \times (\R^n \setminus \Omega)}\frac{
\left||\chi_E(x) - \chi_{E^c}(x)-g(y)|^2-|\chi_F(x) - \chi_{F^c}(x)-g(y)|^2\right|}{|x-y|^{n+2s}}\,dx\,dy\\
&&\qquad\quad
+
\iint_{\Omega'\times(\R^n\setminus\Omega)}\frac{\big||\chi_E(x) -\chi_E(y)|^2-|\chi_F(x) -\chi_E(y)|^2\big|}{|x-y|^{n+2s}}\,dx\,dy\\
&&\quad\le 6
\iint_{\Omega'\times(\R^n\setminus\Omega)}\frac{|\chi_E(x)-\chi_F(x)|}{|x-y|^{n+2s}}\,dx\,dy\\&&\quad\le  6
\iint_{\Omega'\times(\R^n\setminus B_\rho)}\frac{|\chi_E(x)-\chi_F(x)|}{|z|^{n+2s}}\,dx\,dz 
\\&&\quad=  C |E \Delta F| \rho^{-2s}, 
\end{eqnarray*}
where~$\rho>0$ denotes the distance between~$\partial\Omega$ and~$\partial\Omega'$, and the desired result follows.
\end{proof}

%%%%%%%%%%%%%%%%%%
\section{Energy estimates}\label{sec:energy}
%%%%%%%%%%%%%%%%%%

We now quantify the energy drop allowed by a jump discontinuity.

\begin{proposition}\label{prop:anydim65}
Let~$n\ge1$, $b>0$, and~$c\in\left(0,\frac12\right)$.
Let~$\Omega\subset\R^n$ and~$\bar u:\R^n\to[-1,1]$.

Then, there exist~$\varsigma>0$ and~${\theta \in (0,b)}$, depending only on~$n$, $s$, $b$ and~$c$,
such that the following statement holds true.

Assume that there exist~$\delta>0$ and~$p$, $q\in\R^n$ such that~$|p-q|<\delta$, $B_{c\delta}(p)\subseteq\Omega$, $\bar u=-1$ in~$B_{c\delta}(p)$, and~$\bar u\ge-1+b$ in~$B_{c\delta}(q)$.

Then, the function 
\[
u_\delta(x) = \begin{cases} \bar{u}(x) & \hbox{if}~x \in \R^n \setminus B_{c \delta}(p) ,\\
-1+ \theta & \hbox{if}~x \in B_{c \delta}(p)
\end{cases}
\]
satisfies
$$ u_\delta(Q_\Omega)\le\bar u(Q_\Omega)- \varsigma \delta^{n-2s}.$$
\end{proposition}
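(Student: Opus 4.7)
My plan is to compute $u_\delta(Q_\Omega)-\bar u(Q_\Omega)$ directly and exhibit a negative term of order $\delta^{n-2s}$ that beats a competing positive term of the same order. First I would note that since $u_\delta\equiv\bar u$ outside $B:=B_{c\delta}(p)\subseteq\Omega$, every pair $(x,y)$ contributing to the difference automatically lies in $Q_\Omega$, and that, since both $u_\delta$ and $\bar u$ are constant on $B$, the $B\times B$ part vanishes on each side. Using symmetry of the kernel, this reduces the difference to
\[
u_\delta(Q_\Omega)-\bar u(Q_\Omega)=2\int_B\int_{\R^n\setminus B}\frac{|(-1+\theta)-\bar u(y)|^2-|-1-\bar u(y)|^2}{|x-y|^{n+2s}}\,dy\,dx,
\]
and the identity $a^2-b^2=(a-b)(a+b)$ simplifies the numerator to $\theta^2-2\theta(\bar u(y)+1)$, which I can split into a positive and a negative contribution.

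For the positive part, an elementary scaling argument gives
\[
\int_B\int_{\R^n\setminus B}\frac{dy\,dx}{|x-y|^{n+2s}}=C_1(n,s)\,(c\delta)^{n-2s},
\]
since this integral is (proportional to) the fractional perimeter of the ball $B$ in $\R^n$, which scales like $r^{n-2s}$. Hence the positive contribution is at most $2C_1 c^{n-2s}\theta^2\,\delta^{n-2s}$.

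For the negative part, the key observation is that the hypotheses force the balls $B_{c\delta}(p)$ and $B_{c\delta}(q)$ to be disjoint: otherwise a common point would satisfy both $\bar u=-1$ and $\bar u\ge -1+b$, contradicting $b>0$. In particular $|p-q|\ge 2c\delta$ and $B_{c\delta}(q)\subseteq\R^n\setminus B$. For $x\in B$ and $y\in B_{c\delta}(q)$, the triangle inequality yields $|x-y|\le c\delta+|p-q|+c\delta<(1+2c)\delta$, so, using $\bar u(y)+1\ge b$ on $B_{c\delta}(q)$,
\[
\int_B\int_{\R^n\setminus B}\frac{\bar u(y)+1}{|x-y|^{n+2s}}\,dy\,dx\;\ge\;\frac{b\,|B_{c\delta}(p)|\,|B_{c\delta}(q)|}{((1+2c)\delta)^{n+2s}}=C_2(n,s,c)\,b\,\delta^{n-2s}.
\]

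Combining the two estimates, I would obtain
\[
u_\delta(Q_\Omega)-\bar u(Q_\Omega)\;\le\;\theta\bigl(2C_1 c^{n-2s}\theta-4C_2 b\bigr)\delta^{n-2s},
\]
and then choose $\theta\in(0,b)$ small enough (depending only on $n,s,b,c$) so that the parenthesis is at most $-2C_2 b$, yielding the claim with $\varsigma:=2C_2 b\theta$. The only substantive point I anticipate is verifying the disjointness of the two balls, which is forced by the pointwise compatibility of the hypotheses on $\bar u$; every other step reduces to elementary scaling and to the crude lower bound on the singular kernel available when $x$ and $y$ are confined to $B_{c\delta}(p)\times B_{c\delta}(q)$, a pair at distance at most $(1+2c)\delta$.
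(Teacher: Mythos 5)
Your argument is correct and follows essentially the same route as the paper: reduce the energy difference to a single integral over $B_{c\delta}(p)\times(\R^n\setminus B_{c\delta}(p))$ using the constancy of both functions on the ball, split off a positive term of size $\sim\theta^2\delta^{n-2s}$ controlled by the fractional perimeter of $B_{c\delta}(p)$, extract a negative term of size $\sim\theta b\,\delta^{n-2s}$ by restricting to $B_{c\delta}(q)$ where $\bar u+1\ge b$ and $|x-y|\lesssim\delta$, and then choose $\theta$ small so the negative term wins. The only cosmetic difference is that you split the integrand $\theta^2-2\theta(\bar u(y)+1)$ while the paper splits the domain of integration into $B_{c\delta}(q)$ and its complement, and you make explicit the (correct) observation that the hypotheses force $B_{c\delta}(p)$ and $B_{c\delta}(q)$ to be disjoint, a point the paper uses implicitly.
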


\begin{proof} If~$u=\bar u$ in~$\R^n\setminus B_{c\delta}(p)$, then
\begin{eqnarray*}&&
u(Q_\Omega)-\bar u(Q_\Omega)=
\iint_{Q_\Omega}\frac{|u(x)-u(y)|^2-|\bar u(x)-\bar u(y)|^2}{|x-y|^{n+2s}}\,dx\,dy\\
&&\qquad=\iint_{Q_\Omega}\frac{\big[ \big(u(x)-\bar u(x)\big)-\big(u(y)-\bar u(y)\big)\big]\big[ \big(u(x)+\bar u(x)\big)-\big(u(y)+\bar u(y)\big)\big]}{|x-y|^{n+2s}}\,dx\,dy\\
&&\qquad=\iint_{B_{c\delta}(p)\times B_{c\delta}(p)}\frac{\big[ \big(u(x)-\bar u(x)\big)-\big(u(y)-\bar u(y)\big)\big]\big[ \big(u(x)+\bar u(x)\big)-\big(u(y)+\bar u(y)\big)\big]}{|x-y|^{n+2s}}\,dx\,dy\\
&&\qquad\qquad+2\iint_{{B_{c\delta}(p)\times(\R^n\setminus B_{c\delta}(p))}}\frac{\big(u(x)-\bar u(x)\big)\big(u(x)+\bar u(x)-2\bar u(y)\big)}{|x-y|^{n+2s}}\,dx\,dy.
\end{eqnarray*}

In particular, given~$\theta\in(0,{b})$, to be taken conveniently small in what follows, if~$u=-1+\theta=\bar u+\theta$ in~$B_{c\delta}(p)$, we evince that
\begin{equation}\label{qsdjoclnDqosch0I}\begin{split}&u(Q_\Omega)-\bar u(Q_\Omega)=
2\theta\iint_{{B_{c\delta}(p)\times(\R^n\setminus B_{c\delta}(p))}}\frac{\theta-2-2\bar u(y)}{|x-y|^{n+2s}}\,dx\,dy.
\end{split}\end{equation}

We also observe that~$\theta-2-2\bar u\le\theta$ and accordingly
\begin{eqnarray*}&&\iint_{{B_{c\delta}(p)\times\big(\R^n\setminus (B_{c\delta}(p)\cup B_{c\delta}(q))\big)}}\frac{\theta-2-2\bar u(y)}{|x-y|^{n+2s}}\,dx\,dy\\&&\qquad\le
\iint_{{B_{c\delta}(p)\times(\R^n\setminus B_{c\delta}(p))}}\frac{\theta}{|x-y|^{n+2s}}\,dx\,dy\\&& \qquad=\theta \,\operatorname{Per}_s(B_{c\delta}(p),\R^n)\\&& \qquad=C_0\,\theta \delta^{n-2s},
\end{eqnarray*}
for some~$C_0>0$, depending only on~$n$ and~$s$.

This, in tandem with~\eqref{qsdjoclnDqosch0I}, leads to
\begin{eqnarray*}&&\frac{u(Q_\Omega)-\bar u(Q_\Omega)}{2\theta}
\le\iint_{{B_{c\delta}(p)\times B_{c\delta}(q)}}\frac{\theta-2-2\bar u(y)}{|x-y|^{n+2s}}\,dx\,dy+C_0\,\theta \delta^{n-2s}\\&&\qquad\le
\iint_{{B_{c\delta}(p)\times B_{c\delta}(q)}}\frac{\theta-2b}{|x-y|^{n+2s}}\,dx\,dy+C_0\,\theta \delta^{n-2s}.
\end{eqnarray*}

Also, if~$x\in B_{c\delta}(p)$ and~$y\in B_{c\delta}(q)$, we have that~$|x-y|\le|p-q|+2c\delta\le 2\delta$ and accordingly
$$ \iint_{{B_{c\delta}(p)\times B_{c\delta}(q)}}\frac{dx\,dy}{|x-y|^{n+2s}}\ge
\iint_{{B_{c\delta}(p)\times B_{c\delta}(q)}}\frac{dx\,dy}{(2\delta)^{n+2s}}=
C_1\,c^{2n}\delta^{n-2s},$$
for some~$C_1>0$, depending only on~$n$ and~$s$.

As a result, taking also $$\theta\le\frac{2C_1\,b\,c^{2n}}{2C_0+C_1\,c^{2n}},$$ we find that
\begin{eqnarray*}&&\frac{u(Q_\Omega)-\bar u(Q_\Omega)}{2\theta}
\le -C_1\,c^{2n}\,(2b-\theta)\delta^{n-2s}+C_0\,\theta \delta^{n-2s}\le
-\frac{C_1\,c^{2n}\,(2b-\theta)}2\,\delta^{n-2s},
\end{eqnarray*}yielding the desired result.

\end{proof}

When~$n=1$, the result in Proposition~\ref{prop:anydim65} can be made more precise.
Though we will not use this refined calculation in this paper,
for the sake of completeness we present it in Appendix~\ref{appe:n1}.

%%%%%%%%%%%%%%%%%%%%%%
\section{Covering lemma}\label{sec:covering}
%%%%%%%%%%%%%%%%%%%%%%

To iterate Proposition~\ref{prop:anydim65}, we need the following simple covering lemma. The proof is straightforward, but we write the details for completeness. 

\begin{lemma}\label{lem:covering}
Let~$E \subset \R^n$ be a measurable set with nontrivial boundary.
Suppose that there exist~$x_0 \in \partial E$ and~$r>0$ such that~$\partial E \cap B_r(x_0)$ is a Lipschitz graph with Lipschitz constant~$L>0$.

Then, there exist constants~$C_1$, $c_1>0$, depending only on~$n$, $r$ and~$L$, such that, for~$\delta>0$ sufficiently small,
there exist~$N_\delta \in \N$ satisfying
\[
c_1 \delta^{1-n} \leq N_\delta \leq C_1\delta^{1-n}
\]
and a finite sequence of points~$x_i \in\partial E \cap B_r(x_0)$ 
with~$i\in\{1,\dots,N_\delta\}$ such that 
\[
B_\delta(x_i) \subset B_r(x_0) \qquad  \hbox{and} \qquad |x_i-x_j|>2\delta
\quad \hbox{for}~i\not=j. 
\] 
\end{lemma}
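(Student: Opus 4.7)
The plan is to exploit the local graph representation of~$\partial E \cap B_r(x_0)$ directly and reduce the problem to a packing estimate in~$\R^{n-1}$. After a rigid motion, I may assume that~$x_0 = 0$ and that there exist an open set~$U \subset \R^{n-1}$ containing the origin and a Lipschitz function~$\varphi: U \to \R$, with~$\varphi(0) = 0$ and Lipschitz constant~$L$, such that
\[
\partial E \cap B_r(x_0) = \{(y', \varphi(y')) : y' \in U\} \cap B_r(x_0).
\]
I will write~$B'_t(z')$ for the open $(n-1)$-dimensional ball of radius~$t$ centered at~$z' \in \R^{n-1}$.

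First, I would fix once and for all~$\rho := r/\bigl(2\sqrt{1+L^2}\bigr)$, so that from~$|(y',\varphi(y'))|^2 \le (1+L^2)|y'|^2$ every point of the graph piece~$\Gamma_\rho := \{(y', \varphi(y')) : y' \in \overline{B'_\rho(0)}\}$ has Euclidean distance at most~$r/2$ from~$x_0$. This calibration guarantees that, as soon as~$\delta < r/2$, any point~$x_i \in \Gamma_\rho$ automatically satisfies~$B_\delta(x_i) \subset B_r(x_0)$, which disposes of the inclusion requirement of the lemma without further effort.

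Second, I would pick a maximal collection~$\{y_1', \dots, y_{N_\delta}'\} \subset B'_\rho(0)$ subject to the separation constraint~$|y_i' - y_j'| > 2\delta$ for all~$i \ne j$, and then set~$x_i := (y_i', \varphi(y_i'))$. Since the projection onto~$\R^{n-1}$ is $1$-Lipschitz, the bound~$|x_i - x_j| \ge |y_i' - y_j'| > 2\delta$ is automatic, and~$x_i \in \partial E \cap B_r(x_0)$ by construction.

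Finally, the count~$N_\delta$ is pinned down by standard volume packing in~$\R^{n-1}$: maximality forces~$\bigcup_i \overline{B'_{2\delta}(y_i')} \supset B'_\rho(0)$, which gives the lower bound~$N_\delta \ge (\rho/(2\delta))^{n-1}$, whereas the open balls~$B'_\delta(y_i')$ are pairwise disjoint and contained in~$B'_{\rho + \delta}(0)$, producing~$N_\delta \le ((\rho+\delta)/\delta)^{n-1} \le (2\rho/\delta)^{n-1}$ as soon as~$\delta \le \rho$. Since~$\rho$ depends only on~$r$ and~$L$, both estimates yield constants~$c_1,\,C_1>0$ depending only on~$n$, $r$ and~$L$. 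I do not anticipate any genuine obstacle: the lemma is essentially a flattening followed by a Vitali-type packing, and the only care required is in the calibration of~$\rho$ so that the lifted $\delta$-balls stay inside~$B_r(x_0)$.
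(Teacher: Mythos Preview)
Your proposal is correct and follows essentially the same approach as the paper: flatten $\partial E \cap B_r(x_0)$ to a Lipschitz graph, pack $2\delta$-separated points in an $(n-1)$-dimensional ball of fixed radius~$\rho$, lift to the graph, and use volume comparison for the count. Your separation argument via the $1$-Lipschitz projection $|x_i-x_j|\ge|y_i'-y_j'|$ is in fact slightly cleaner than the paper's (which scales the base separation by~$1+L$ and then unwinds); the only implicit point you should make explicit is that~$\rho$ may need to be shrunk so that~$\overline{B'_\rho(0)}\subset U$, since your specific choice~$\rho=r/(2\sqrt{1+L^2})$ controls the norm of the lifted points but not, a priori, the domain of~$\varphi$.
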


\begin{proof}
Without loss of generality, assume that~$x_0=0$. 
We denote~$x = (x',x_n) \in \R^{n-1}\times \R$. 
Assume, up to a rotation, that~$\psi = \psi(x')$ is a Lipschitz function with constant~$L$ such that~$\psi(0) = 0$ and 
\[
\partial E \cap B_r  = \big\{(x', \psi(x'))~\hbox{s.t.}~x' \in U\big\}
\]
for some bounded, open Lipschitz set~$U \subset \R^{n-1}$. 

Denote a ball in~$\R^{n-1}$ of radius~$t>0$ and centered at~$x' \in \R^{n-1}$ by~$B_t'(x')$ and set~$B_t' := B_t'(0)$. 
Fix~$\rho>0$ such that~$(1+L)\rho < r$ and~$B_\rho' \subset U$. 
 
Then, there exist constants~$C_n$, $c_n>0$, depending only on~$n$, such that, for~$\delta>0$ sufficiently small, there exist~$N_\delta \in \N$ and points~$x_i' \in B_\rho'$
with~$i\in\{1,\dots,N_\delta\}$
such that
\[
c_n \left( \frac{\rho}{(1+L)\delta}\right)^{n-1} \leq N_\delta \leq C_n \left( \frac{\rho}{(1+L)\delta}\right)^{n-1}
\]
and
\[
B_{(1+L) \delta}'(x_i') \subset B_{\rho}' \quad \hbox{and} \quad
|x_i'-x_j'|>2(1+L)\delta \quad \hbox{for}~i\not=j.
\]

Now define~$x_i := (x_i', \psi(x_i'))$ and note that~$x_i \in \partial E \cap B_r$. 
Moreover, we have that~$B_\delta(x_i) \subset B_r$. Indeed, 
if~$y \in B_\delta(x_i)$, then
\begin{align*}
&|y| \leq |y-x_i| + |(x_i',\psi(x_i')|
	\leq \delta + |x_i'| + |\psi(x_i') -\psi(0)|\\
	&\qquad\qquad\leq \delta + (1+L)  |x_i'|
	\leq \delta + (1+L) \rho < r,
\end{align*}
for $\delta$ sufficiently small, and so~$y\in B_r$, as desired.

Lastly, for~$i\not=j$, we see that
\[
|x_i - x_j|
	\geq |x'_i - x'_j|-|\psi(x_i')-\psi(x_j')| 
	\geq |x'_i - x'_j| - L |x_i' - x_j'|
> 2(1+L)\delta -L|x_i -x_j|
\]
so that~$|x_i - x_j|>2 \delta$. 
\end{proof}

\section{Proof of Theorem~\ref{MSA:TH}}\label{sec:main}
%%%%%%%%%%%%%%%%%%%%%%

\begin{proof}[Proof of Theorem~\ref{MSA:TH}]
First of all, we point out that when dealing with sets, up to a modification
of null measure, we implicitly choose
a ``good'' representative with the ``smallest possible boundary'',
see~\cite[Appendix~A]{MR3516886} for the details.

Now, since~$\bar{u} \in\mathcal{U}_1$, we have that~$E$ is a minimizer
for the functional~${\mathcal{H}}(\cdot,\Omega)$ in~\eqref{yrueifhndjgfdh54yu3}.
Therefore, we can exploit 
Proposition~\ref{ALM} and obtain that~$E$ is an almost minimizer of the fractional perimeter functional in compact subsets of~$\Omega$.

We now distinguish two cases:
\begin{itemize}
\item[(i)] either there exists a connected component~$\Omega'$ of~$\Omega$
such that~$\partial E\cap\Omega'\neq\varnothing$,
\item[(ii)] or for all connected components~$\Omega'$ of~$\Omega$ it holds that~$\partial E\cap \Omega'=\varnothing$.
\end{itemize}

\medskip

\underline{\bf Case~1}. 
Suppose first that~(i) holds.
In this case, 
thanks to the regularity result for almost minimizers of the fractional perimeter functional in~\cite[Theorem~1.6]{DVV},
we have that the assumption in~\ref{item:interior} is satisfied.
Therefore,
there exist~$x_0 \in \partial E$ and~$r>0$ such that~$B_{r}(x_0) \Subset \Omega$ and~$\partial E \cap B_r(x_0)$ is the graph of a~$C^{1,\alpha}$ function.  
%Consequently, there exists $r \in (0,\dist(x_0,\partial \Omega)/2)$ such that $\partial E \cap B_r(x_0)$ is a Lipschitz graph. 
For~$\delta>0$ sufficiently small, let~$B_{\delta}(x_i)$ with~$x_i \in \partial E \cap B_r(x_0)$, for~$i=1,\dots, N_\delta$, be as in Lemma~\ref{lem:covering}.

Since~$E$ is an almost minimizer of the fractional perimeter functional,
we are in the position of using the clean ball condition in~\cite[Corollary~2.5]{DVV}. In this way, for any~$i \in \{1,\dots, N_{\delta}\}$,
there exist points~$p_i \in \Omega \setminus E$ and~$q_i \in \Omega \cap E$ such that~$B_{c\delta}(p_i) \subset E^c \cap B_{\delta/2}(x_i)$ 
and~$B_{c \delta}(q_i) \subset E \cap B_{\delta/2}(x_i)$,
where~$c\in (0,\frac12)$ depends only on~$n$ and~$s$. 
In particular, 
\[
|p_i-q_i| < \delta, \qquad \bar{u} = -1~\hbox{in}~B_{c \delta}(p_i),\qquad \hbox{and} \qquad \bar{u}= 1~\hbox{in}~B_{c \delta}(q_i). 
\]

Hence, by Proposition~\ref{prop:anydim65}
(used here with~$b=2$), we have that the function
\begin{equation}\label{eq:u1}
u_1(x) := \begin{cases} \bar{u}(x) & \hbox{if}~x \in \R^n \setminus B_{c\delta}(p_1),\\
-1+\theta & \hbox{if}~x \in B_{c\delta}(p_1)
\end{cases}
\end{equation}
 satisfies
\[
u_1(Q_\Omega) \leq \bar{u}(Q_\Omega) - \varsigma \delta^{n-2s} 
\]
where~$\varsigma>0$ and~$\theta \in (0,2)$ depend only on~$n$, $s$, and~$c$.
 
Iterating for all~$i =2,\dots, N_{\delta}$, we apply Proposition~\ref{prop:anydim65} with~$u_{i-1}$ in place of~$\bar{u}$ to find a function
\[
u_{i}(x) := \begin{cases} u_{i-1}(x) & \hbox{if}~x \in \R^n \setminus B_{c\delta}(p_i)\\
-1+\theta & \hbox{if}~x \in B_{c\delta}(p_i),
\end{cases}
\]
which satisfies
\begin{equation}\label{eq:interated-energy}
u_i(Q_\Omega) 
	\leq u_{i-1}(Q_\Omega) - \varsigma \delta^{n-2s}.
\end{equation}

We define~$u_\delta := u_{N_\delta}$. In this way,
\[
u_\delta =
\begin{cases}
\bar{u} & \hbox{in}~\displaystyle\R^n \setminus \bigcup_{i=1}^{N_\delta} B_{c\delta}(p_i),\\
-1+\theta & \hbox{in}~\displaystyle\bigcup_{i=1}^{N_\delta} B_{c\delta}(p_i).
\end{cases} 
% \quad \hbox{in}~\R^n \setminus \bigcup_{i=1}^{N_\delta} B_{c\delta}(p_i)
%\quad \hbox{and} \quad
%u_\delta = 1+ \theta \quad \hbox{in}~B_{c\delta}(p_i) \quad \hbox{for}~i=1,\dots, N_{\delta}. 
\]
By~\eqref{eq:interated-energy},
\begin{equation}\label{frhejdwghfedjtyrhe}
u_\delta(Q_\Omega) 
	\leq \bar{u}(Q_\Omega) -N_\delta \varsigma \delta^{n-2s}.
	% \leq \bar{u}(Q_\Omega) - \varsigma C^{-1}\delta^{1-n}\delta^{n-2s} 
	%= \bar{u}(Q_\Omega) - \bar{\varsigma} \delta^{1-2s} 
\end{equation}
Moreover, 
\begin{equation}\label{frhejdwghfedjtyrhe2}\begin{split}
\int_{\Omega} W(u_\delta(x)) \, dx
	&= \sum_{i=1}^{N_\delta} \int_{B_{c\delta}(p_i)} W(u_\delta(x)) \, dx
	= \sum_{i=1}^{N_\delta}  W(\theta-1)|B_{c\delta}(p_i)|
	= \omega N_\delta \delta^n 
	%W(1+\theta) |B_1| c^n N_\delta \delta^n 
%	&\leq W(1+\theta) |B_1| c^n C \delta^{1-n} \delta^n =: \omega \delta
\end{split}\end{equation}
where~$\omega>0$ depends only on~$n$, $s$, $c$, $W$, $\Omega$, and~$r$.
 
Since~$\bar{u} \in\mathcal{U}_1$, we have that~$\bar{u}(Q_\Omega) =  m_1$ and thus, by~\eqref{frhejdwghfedjtyrhe} and~\eqref{frhejdwghfedjtyrhe2}, we gather that
\begin{align*}
u_\delta(Q_\Omega) - m_1 + \frac{1}{\eps^{2s}} \int_{\Omega} W(u_\delta(x)) \, dx
	%&=u_\delta(Q_\Omega) - \bar u(Q_\Omega) + \frac{1}{\eps^{2s}} \int_{\Omega} W(u_\delta(x)) \, dx\\
	&\leq -N_\delta {\varsigma}\delta^{n-2s} +\frac{\omega N_\delta \delta^n}{\eps^{2s}}\\
	&= \left( -{\varsigma} \delta^{1-2s} +\frac{\omega \delta }{\eps^{2s}} \right)N_\delta \delta^{n-1}.
\end{align*}

By~\cite[Corollary~4.7]{PAPERONE}, the quantity 
\[
-{\varsigma} \delta^{1-2s} +\frac{ \omega \delta}{\eps^{2s}} 
\]
attains the minimum at
\begin{equation}\label{dvse5432fdxfcshgfkre5y43}
\delta = \left(\frac{(1-2s) {\varsigma}}{\omega}\right)^{\frac{1}{2s}} \eps
\end{equation} and the value at the minimum is
$$-2s \left(\frac{1-2s}{\omega}\right)^{\frac{1-2s}{2s}} {\varsigma}^{\frac{1}{2s}} \eps^{1-2s}.$$ 
Therefore,
$$ u_\delta(Q_\Omega) - m_1 + \frac{1}{\eps^{2s}} \int_{\Omega} W(u_\delta(x)) \, dx
\le -2s \left(\frac{1-2s}{\omega}\right)^{\frac{n-2s}{2s}} \bar{\varsigma}^{\frac{1}{2s}} \eps^{1-2s} N_\delta \delta^{n-1}. $$

Recall from Lemma~\ref{lem:covering} that~$N_\delta \delta^{n-1} \geq c_1>0$. 
Therefore, taking~$v_\eps := u_\delta$ with~$\delta$
as in~\eqref{dvse5432fdxfcshgfkre5y43}, we find that 
\[
v_\eps(Q_\Omega) - m_1+ \frac{1}{\eps^{2s}} \int_{\Omega} W(v_\eps(x)) \, dx
	\leq -2s c_1 \left(\frac{1-2s}{\omega}\right)^{\frac{n-2s}{2s}} \bar{\varsigma}^{\frac{1}{2s}} \eps^{1-2s}. 
\]
The desired result plainly follows. 

\bigskip

\underline{\bf Case~2}. 
Suppose now that~(ii) holds.
Therefore, in every connected component of~$\Omega$, we have that
either~$\bar{u} \equiv 1$ or~$\bar{u} \equiv -1$.
Accordingly, the assumption in~\ref{item:interior} does not hold and
we assume instead that~\ref{item:boundary} is in force.

Let~$\Omega'$ be the connected component of~$\Omega$ given by~\ref{item:boundary} and, without loss of generality, assume that~$\bar{u} \equiv -1$ in~$\Omega'$. 

By~\ref{item:boundary}, we also have that
there exists a point~$x_0 \in \partial \Omega'\subset\partial\Omega$ such that~$g(x_0) \not= -1$. 
By the continuity of~$g$, 
there exist~$r$, $b>0$ such that~$g \geq -1+b$ in~$\Omega^c \cap B_r(x_0)$. 
Possibly making~$r$ smaller, we have that~$\partial \Omega' \cap B_r(x_0)$ is a Lipschitz graph. 

Hence, we are in the position of using Lemma~\ref{lem:covering} (applied here
with~$E:=\Omega'$). Thus, for~$\delta>0$ sufficiently small, let~$B_{\delta}(x_i)$ with~$x_i \in \partial\Omega' \cap B_r(x_0)$, for~$i=1,\dots, N_\delta$, be as in Lemma~\ref{lem:covering}.

Since~$\Omega$ is Lipschitz, there exists~$c>0$, depending only on~$n$ and the Lipschitz constant of~$\partial \Omega' \cap B_r(x_0)$, such that, for all~$\delta>0$ sufficiently small and for all~$i\in\{1,\dots,N_\delta\}$, there exists points~$p_i \in \Omega'$ and~$q_i \in \Omega^c$ such that~$B_{c\delta}(p_i) \subset \Omega' \cap B_{\delta/2}(x_i)$ 
and~$B_{c \delta}(q_i) \subset \Omega^c \cap B_{\delta/2}(x_i)$. 
In particular, 
\[
|p_i-q_i| < \delta, \qquad \bar{u} = -1~\hbox{in}~B_{c \delta}(p_i),\qquad \hbox{and} \qquad \bar{u} = g >-1+b~\hbox{in}~B_{c \delta}(q_i). 
\]

The rest of the proof follows along the same lines as Case~1, starting from~\eqref{eq:u1}. 
%there exist $\delta_0>0$, $b>0$, and $c \in (0,\frac12)$ such that for 
% $\delta \in (0,\delta_0)$, there is a $q \in \Omega^c$ with $\operatorname{dist}(q, \partial \Omega)< \delta/2$ and $g+1 = |g+1| \geq b$ in $B_{c \delta}(q)$. Since $\Omega$ is {Lipschitz}, perhaps making $c$ and $\delta_0$ smaller, for all $\delta \in (0,\delta_0)$, there is $p \in \Omega$ with $\operatorname{dist}(q, \partial \Omega)< \delta/2$ and $B_{c\delta}(p) \subset \Omega$. 
% Since $\bar{u} \equiv -1$ in $B_{c\delta}(p)$, the hypotheses of Proposition \ref{prop:anydim65} are satisfied. 
\end{proof}

%%%%%%%%%%%%%%%%%%%%%%
\section{Proof of Theorem \ref{lem:trivial}}\label{appendix:2}
%%%%%%%%%%%%%%%%%%%%%%

In this section, we give the details of the proof of Theorem~\ref{lem:trivial}.

\begin{proof}[Proof of Theorem~\ref{lem:trivial}]
Since $m_1 = 0$, it is enough to consider the sequence of energies
\begin{equation}\label{eq:Feps-k}
\mathcal{F}_\eps^{(k)}(u) = \frac{1}{\eps^k} \left[ u(Q_\Omega) + \frac{1}{\eps^{2s}} \int_{\Omega} W(u(x)) \, dx \right], \quad {\mbox{ with }}k \geq 2,
\end{equation}
and show that~
\begin{equation}\label{eq:claim}
\mathcal{F}^{(k)}~\hbox{is the $\Gamma$-limit of}~\mathcal{F}_\eps^{(k)} \quad \hbox{for all}~k \geq 2. 
\end{equation}
Indeed, is it clear that
\[
\mathcal{F}_\eps^{(2)}(u) = \frac{\mathcal{F}_\eps^{(1)}(u) - 0}{\eps} = \frac{1}{\eps} \left[ u(Q_\Omega) + \frac{1}{\eps^{2s}} \int_{\Omega} W(u(x)) \, dx \right].
\]
If \eqref{eq:claim} holds with~$k=2$, then~$m_2 = m_1 = 0$ and~$\U_2 = \U_1$. 
Proceeding by induction, if, for some~$k \geq 2$, the energy~$\mathcal{F}_\eps^{(k)}$ given in~\eqref{eq:Feps-k} satisfies~\eqref{eq:claim}, then~$m_k = m_1 = 0$, $\U_k = \U_1$, and 
\[
\mathcal{F}_\eps^{(k+1)}(u) = \frac{\mathcal{F}_\eps^{(k)}(u)-0}{\eps}
=  \frac{1}{\eps^{k+1}} \left[ u(Q_\Omega) + \frac{1}{\eps^{2s}} \int_{\Omega} W(u(x)) \, dx \right].
\] 
Hence, we will now focus on the proof of~\eqref{eq:claim}.

To prove \eqref{eq:claim}, fix~$k \geq 2$ and let~$\bar u\in X_g$. If~$u_j\in X_g$ is such that~$u_j\to\bar u$ in~$X_g$ and~$\eps_j\searrow0$ as~$j\to+\infty$,
we claim that
\begin{equation}\label{yrfueibcdnsmedhw23456789}
\liminf_{j \to +\infty} \mathcal{F}_{\eps_j}^{(k)}(u_{j})  \geq  \mathcal{F}^{(k)}(\bar{u}).
\end{equation}
Indeed, if~$\bar{u} \in \mathcal{U}_1$, then trivially we have
\[
\liminf_{j \to +\infty} \mathcal{F}_{\eps_j}^{(k)}(u_{j})  \geq 0 = \mathcal{F}^{(k)}(\bar{u}), 
\]
and if~$\bar{u} \in X_g \setminus \mathcal{U}_1$, then by~\cite[Lemma~4.2]{PAPERONE}, 
\[
\liminf_{j \to +\infty} \mathcal{F}_{j}^{(k)}(u_{\eps_j}) 
	 = +\infty. 
\]
These considerations give~\eqref{yrfueibcdnsmedhw23456789}.

Furthermore, we claim that there exists a sequence~$u_{\eps_j}\in X_g$
such that~$u_{\eps_j}\to \bar u$ as~$j\to+\infty$ and
\begin{equation}\label{ghjfkedy446u3rdhsxsawqexrctfgyh}
\limsup_{j \to +\infty} \mathcal{F}_{\eps_j}^{(k)}(u_{\eps_j}) = \mathcal{F}^{(k)}(\bar{u}). 
\end{equation}
Indeed, we take~$u_{\eps_j} := \bar{u}$.
 
If~$\bar{u} \in X_g \setminus \mathcal{U}_1$, 
then
$$  \mathcal{F}^{(k)}(\bar{u})=+\infty$$
and
$$ 
\bar u(Q_\Omega) + \frac{1}{\eps^{2s}} \int_{\Omega} W(\bar u(x)) \, dx>m_1=0.$$
Therefore,
$$ \limsup_{j \to +\infty} \mathcal{F}_{\eps_j}^{(k)}(u_{\eps_j})
=\limsup_{j \to +\infty} \mathcal{F}_{\eps_j}^{(k)}(\bar u)=+\infty,$$
and~\eqref{ghjfkedy446u3rdhsxsawqexrctfgyh} holds true.

If instead~$\bar{u} \in \mathcal{U}_1$, then 
\[
	\mathcal{F}_{\eps_j}^{(k)}(u_j) = 
	\frac{m_1}{\eps_j^k}  = 0 = \mathcal{F}^{(k)}(\bar{u}),
\]
and so
\[
\liminf_{j \to +\infty} 
	\mathcal{F}_{\eps_j}^{(k)}(u_j)= \mathcal{F}^{(k)}(\bar{u}), 
\]
which gives~\eqref{ghjfkedy446u3rdhsxsawqexrctfgyh}.

{F}rom~\eqref{yrfueibcdnsmedhw23456789} and~\eqref{ghjfkedy446u3rdhsxsawqexrctfgyh}, we obtain the desired result.
\end{proof}

%%%%%%%%%%%%%%%%%%%%%%
\begin{appendix}
%%%%%%%%%%%%%%%%%%%%%%

\section{Refinement of Proposition~\ref{prop:anydim65} in dimension~$1$}
\label{appe:n1}

In dimension~$1$, the analysis provided by Proposition~\ref{prop:anydim65}
can be refined, since the regularity of the almost minimizers of the fractional perimeter
allows for the analysis of a sharp transition, along the lines that we discuss here below.

\begin{proposition}
Let~$p\in[0,1]$ and~$a\in(0,1)$.

Let~$\bar u:\R\to[-1,1]$ be such that~$\bar u=-1$ in~$(p-a,p)$.
Assume also that~$\bar u\ge-1+b$ in~$(p,p+a)$, for some~$b>0$.

Then, there exist~$\varsigma>0$ and~$\delta_0\in\left(0,\frac{a}2\right)$, depending only on~$s$, $a$, and~$b$, and, for all~$\delta\in(0,\delta_0)$, a function~$u_\delta:\R\to[-1,1]$ such that~$u_\delta=\bar u$ in~$\R\setminus(p-\delta,p)$ and
$$ u_\delta(Q_{(-1,1)})\le\bar u(Q_{(-1,1)})- \varsigma \delta^{1-2s}.$$
\end{proposition}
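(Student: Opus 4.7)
The plan is to follow the strategy of Proposition~\ref{prop:anydim65} while exploiting the one-sided geometry available in dimension~$1$: instead of lifting~$\bar u$ on a ball~$B_{c\delta}(p)\Subset\Omega$, I would lift on the half-interval~$I_\delta:=(p-\delta,p)$ placed flush against the jump point, so that the ``good'' region~$(p,p+a)$ is adjacent to~$I_\delta$ rather than separated by a positive distance. Concretely, for a small parameter~$\theta\in(0,b)$ to be chosen later, set
\[
u_\delta(x) := \begin{cases} -1+\theta & \text{if } x\in I_\delta,\\ \bar u(x) & \text{otherwise.}\end{cases}
\]
Since $\theta<b\le 2$, $u_\delta\in[-1,1]$; and since $p\in[0,1]$, if we take $\delta_0<1/2$, then $I_\delta\subset\Omega=(-1,1)$.

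Following the same algebraic expansion of the kinetic energy as in the proof of Proposition~\ref{prop:anydim65}, using that $u_\delta-\bar u$ is supported in $I_\delta$ and that $\bar u\equiv-1$ on $I_\delta$ (so the self-integral over $I_\delta\times I_\delta$ vanishes in both kinetic energies), the energy difference reduces to
\[
u_\delta(Q_{(-1,1)}) - \bar u(Q_{(-1,1)}) = 2\theta \int_{p-\delta}^{p} \int_{\R\setminus I_\delta} \frac{\theta-2-2\bar u(y)}{|x-y|^{1+2s}}\,dy\,dx.
\]
I would then split the $y$-integral into the ``good'' region $(p,p+a)$, where the hypothesis $\bar u(y)\ge-1+b$ forces the integrand to be $\le\theta-2b$ (negative once $\theta<2b$), and the ``bad'' region $\R\setminus(I_\delta\cup(p,p+a))$, where only $\bar u(y)\ge-1$ is available and the integrand is at most $\theta$.

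The decisive computation is the good integral
\[
G(\delta) := \iint_{I_\delta\times(p,p+a)} \frac{dx\,dy}{(y-x)^{1+2s}},
\]
which, after the change of variables $u=p-x$, $v=y-p$, evaluates in closed form to $\frac{1}{2s(1-2s)}\bigl[\delta^{1-2s}-(a+\delta)^{1-2s}+a^{1-2s}\bigr]$, with leading asymptotic $\frac{\delta^{1-2s}}{2s(1-2s)}+O(\delta)$ as $\delta\to0$. The bad integral splits into a left-tail piece over $y<p-\delta$ that evaluates by translation to exactly $\frac{\delta^{1-2s}}{2s(1-2s)}$, plus a right-tail piece over $y>p+a$ bounded by $\frac{\delta}{2s\,a^{2s}}=O(\delta)$. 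Combining, the leading $\delta^{1-2s}$ coefficient of the energy drop is proportional to $(\theta-2b)+\theta=2(\theta-b)$; choosing $\theta:=b/2$ renders this negative and yields $u_\delta(Q_{(-1,1)})-\bar u(Q_{(-1,1)})\le-\varsigma\,\delta^{1-2s}$ with $\varsigma$ of order $\frac{b^2}{s(1-2s)}$, provided $\delta_0$ is chosen small enough (depending on $s$, $a$, $b$) that the $O(\delta)$ corrections are dominated by, say, half of the leading term.

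The step I expect to be the main obstacle is the bookkeeping of these lower-order $O(\delta)$ remainders: both the subtraction $-(a+\delta)^{1-2s}+a^{1-2s}$ inside $G(\delta)$ and the right-tail of the bad integral produce $\delta$-terms with constants depending on $a$ and $s$, so one must check that they are swallowed by the leading $\delta^{1-2s}$ term uniformly in $p\in[0,1]$. This is only a routine nuisance because $\delta^{1-2s}/\delta=\delta^{-2s}\to\infty$ as $\delta\to0$; any explicit threshold $\delta_0=\delta_0(s,a,b)$ chosen to make the $O(\delta)$ error at most half of the leading term will close the argument.
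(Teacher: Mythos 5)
Your proposal is correct and follows essentially the same route as the paper's own proof: identical test function $u_\delta=\bar u+\theta\,\chi_{(p-\delta,p)}$, identical reduction of the kinetic-energy difference to $2\theta\iint_{(p-\delta,p)\times(\R\setminus(p-\delta,p))}\big(\theta-2-2\bar u(y)\big)\,|x-y|^{-1-2s}\,dx\,dy$, identical splitting of the $y$-range around $(p,p+a)$, identical leading coefficient proportional to $\theta-b$, and the same final choice $\theta=b/2$. The only inconsequential bookkeeping difference is that you bound the entire left tail $y<p-\delta$ in one stroke via $\theta-2-2\bar u\le\theta$ together with the exact kernel value $\delta^{1-2s}/(2s(1-2s))$, whereas the paper first peels off the far tails $|y-p|>a$ into an $O(\delta)$ remainder and then evaluates the near integral over $(p-a,p-\delta)$.
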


\begin{proof} Note that~$(p-\delta,p)\subseteq(-1,1)$ and therefore, if~$u=\bar u$ in~$\R\setminus(p-\delta,p)$, then
\begin{eqnarray*}&&
u(Q_{(-1,1)})-\bar u(Q_{(-1,1)})=
\iint_{Q_{(-1,1)}}\frac{|u(x)-u(y)|^2-|\bar u(x)-\bar u(y)|^2}{|x-y|^{1+2s}}\,dx\,dy\\
&&\qquad=\iint_{Q_{(-1,1)}}\frac{\big[ \big(u(x)-\bar u(x)\big)-\big(u(y)-\bar u(y)\big)\big]\big[ \big(u(x)+\bar u(x)\big)-\big(u(y)+\bar u(y)\big)\big]}{|x-y|^{1+2s}}\,dx\,dy\\
&&\qquad=\iint_{(p-\delta,p)^2}\frac{\big[ \big(u(x)-\bar u(x)\big)-\big(u(y)-\bar u(y)\big)\big]\big[ \big(u(x)+\bar u(x)\big)-\big(u(y)+\bar u(y)\big)\big]}{|x-y|^{1+2s}}\,dx\,dy\\
&&\qquad\qquad+2\iint_{{(p-\delta,p)\times(\R\setminus(p-\delta,p))}}\frac{\big(u(x)-\bar u(x)\big)\big(u(x)+\bar u(x)-2\bar u(y)\big)}{|x-y|^{1+2s}}\,dx\,dy.
\end{eqnarray*}

In particular, given~$\theta\in(0,2)$, if~$u=-1+\theta=\bar u+\theta$ in~$(p-\delta,p)$,
\begin{equation}\label{OJSLD.002k40}\begin{split}&
u(Q_{(-1,1)})-\bar u(Q_{(-1,1)})=2\theta\iint_{{(p-\delta,p)\times(\R\setminus(p-\delta,p))}}\frac{
\big(\theta-2-2\bar u(y)\big)}{|x-y|^{1+2s}}\,dx\,dy\\
&\quad=\frac\theta{s}\Bigg[\int_{-\infty}^{p-\delta}
\big(\theta-2-2\bar u(y)\big)\left( \frac1{(p-\delta-y)^{2s}}-\frac1{(p-y)^{2s}}\right)\,dy\\&\qquad\quad
+\int_p^{+\infty}\big(\theta-2-2\bar u(y)\big)\left( \frac1{(y-p)^{2s}}-\frac1{(y-p+\delta)^{2s}}\right)\,dy\Bigg].
\end{split}\end{equation}

Now we denote by~$C>0$ a positive constant depending only on~$s$, possibly varying from line to line, and observe that, for all~$\eta\in[0,1]$,
we have that~$(1+\eta)^{2s}\le1+C\eta$, and therefore
\begin{equation}\label{OJSLD.002k4}\begin{split}&
\int_{-\infty}^{p-a}\left| \frac1{(p-\delta-y)^{2s}}-\frac1{(p-y)^{2s}}\right|\,dy=
\int_0^{+\infty}\left| \frac1{(a-\delta+t)^{2s}}-\frac1{(a+t)^{2s}}\right|\,dt\\&\qquad=
\int_0^{+\infty}\frac1{(a+t)^{2s}}\left| 1-\left(\frac{a+t}{a-\delta+t}\right)^{2s}\right|\,dt\\&\qquad=
\int_0^{+\infty}\frac1{(a+t)^{2s}}\left[ \left(1+\frac{\delta}{a-\delta+t}\right)^{2s}-1\right]\,dt\\&\qquad\le
C\delta\int_0^{+\infty}\frac{dt}{(a+t)^{2s}(a-\delta+t)}\\&\qquad\le C\delta\int_0^{+\infty}\frac{dt}{(a+t)^{1+2s}}\\&\qquad\le\frac{C\delta}{a^{2s}}.
\end{split}\end{equation}

Similarly,
\begin{equation*}\begin{split}&
\int_{p+a}^{+\infty}\left| \frac1{(y-p)^{2s}}-\frac1{(y-p+\delta)^{2s}}\right|\,dy=
\int_{0}^{+\infty}\left| \frac1{(a+t)^{2s}}-\frac1{(a+\delta+t)^{2s}}\right|\,dy\le\frac{C\delta}{a^{2s}}.
\end{split}\end{equation*}

Plugging this estimate and~\eqref{OJSLD.002k4} into~\eqref{OJSLD.002k40} we find that
\begin{equation}\label{OJSLD.002k7} \begin{split}&
u(Q_{(-1,1)})-\bar u(Q_{(-1,1)})\le\frac\theta{s}\Bigg[\int_{p-a}^{p-\delta}
\big(\theta-2-2\bar u(y)\big)\left( \frac1{(p-\delta-y)^{2s}}-\frac1{(p-y)^{2s}}\right)\,dy\\&\qquad\quad
+\int_p^{p+a}\big(\theta-2-2\bar u(y)\big)\left( \frac1{(y-p)^{2s}}-\frac1{(y-p+\delta)^{2s}}\right)\,dy\Bigg]+\frac{C\theta\delta}{a^{2s}}.
\end{split}\end{equation}

We also remark that
\begin{equation}\label{OJSLD.002k7b}\begin{split}&
\int_{p-a}^{p-\delta}
\big(\theta-2-2\bar u(y)\big)\left( \frac1{(p-\delta-y)^{2s}}-\frac1{(p-y)^{2s}}\right)\,dy\\&\qquad=
\theta\int_{p-a}^{p-\delta}\left( \frac1{(p-\delta-y)^{2s}}-\frac1{(p-y)^{2s}}\right)\,dy\\&\qquad=\frac{\theta\big((a - \delta)^{1 - 2s}-
a^{1 - 2s}+ \delta^{1 - 2s} \big)}{1-2s}\\&\qquad\le\frac{\theta\,\delta^{1 - 2s} }{1-2s}.\end{split}\end{equation}
In addition, in~$(p,p+a)$ we have that~$-\theta+2+2\bar u\ge 2b-\theta>0$ as long as~$\theta\in(0,2b)$ and consequently
\begin{eqnarray*}&&
\int_p^{p+a}\big(-\theta+2+2\bar u(y)\big)\left( \frac1{(y-p)^{2s}}-\frac1{(y-p+\delta)^{2s}}\right)\,dy\\&&\qquad\ge
(2b-\theta)\int_p^{p+a}\left( \frac1{(y-p)^{2s}}-\frac1{(y-p+\delta)^{2s}}\right)\,dy
\\&&\qquad=\frac{2b-\theta}{1-2s}\left( a^{1-2s}-(a+\delta)^{1-2s}+\delta^{1-2s}\right)\\
\\&&\qquad\ge\frac{2b-\theta}{1-2s}\left( \delta^{1-2s}-\frac{C\delta}{a^{2s}}\right).
\end{eqnarray*}

We insert this information and~\eqref{OJSLD.002k7b} into~\eqref{OJSLD.002k7} and we obtain that
\begin{equation*} \begin{split}&
u(Q_{(-1,1)})-\bar u(Q_{(-1,1)})\le\frac\theta{s}\Bigg[\frac{\theta\,\delta^{1 - 2s} }{1-2s}
-\frac{2b-\theta}{1-2s}\left( \delta^{1-2s}-\frac{C\delta}{a^{2s}}\right)
\Bigg]+\frac{C\theta\delta}{a^{2s}}\\&\qquad
\le\frac{\theta\delta^{1-2s}}{s(1-2s)}\big[ \theta
-(2b-\theta)
\big]+\frac{C\theta(1+b)\delta}{a^{2s}}\\&\qquad
=\frac{2\theta(\theta-b)\delta^{1-2s}}{s(1-2s)}+\frac{C\theta(1+b)\delta}{a^{2s}}.
\end{split}\end{equation*}
The desired result now follows by choosing~$u_\delta:=u$ with~$\theta:=\frac{b}2$. 
\end{proof}

\end{appendix}

%%%%%%%%%%%%%%%
\section*{Acknowledgments}
%%%%%%%%%%%%%%%

This work has been supported by the Australian Future Fellowship
FT230100333 ``New perspectives on nonlocal equations''
and by the Australian Laureate Fellowship FL190100081 ``Minimal surfaces, free boundaries and partial differential equations.''

%%%%%%%%%%%%%%%%%%%%%%%%%%
\bibliographystyle{imsart-number}
\bibliography{gamma}

\end{document}